\documentclass[letterpaper, 10 pt, conference]{ieeeconf} 
\IEEEoverridecommandlockouts                                                                                   
\overrideIEEEmargins        
\usepackage{graphics} 
\usepackage{epsfig} 
\usepackage{amsmath}
\usepackage{amsfonts} 
\setcounter{MaxMatrixCols}{20}
\usepackage{xcolor}
\usepackage{balance}
\usepackage{dblfloatfix}
\usepackage{soul}
\usepackage{latexsym,theorem}
\usepackage{url}
\usepackage{cite}
{\theorembodyfont{\slshape}\newtheorem{theorem}{Theorem}[section]}
{\theorembodyfont{\slshape}}
{\theorembodyfont{\slshape}\newtheorem{lemma}[theorem]{Lemma}}
{\theorembodyfont{\slshape}}
{\theorembodyfont{\slshape}}
{\theorembodyfont{\upshape}\newtheorem{definition}[theorem]{Definition}}
{\theorembodyfont{\upshape}\newtheorem{problem}[theorem]{Problem}}
{\theorembodyfont{\upshape}\newtheorem{remark}[theorem]{Remark}}
{\theorembodyfont{\upshape}\newtheorem{assumption}[theorem]{Assumption}}
{\theorembodyfont{\upshape}}
\title{\LARGE \bf
Dynamic Output-Feedback Controller Synthesis for Dissipativity and $H_2$ Performance from Noisy Input-State Data
}
\author{Pietro Kristović$^{1}$, Andrej Jokić$^{1}$ and Mircea Lazar$^{2}$
\thanks{$^{1}$ Faculty of Mechanical Engineering and Naval Architecture, University of Zagreb, Zagreb, Croatia, {\tt\small pietro.kristovic@fsb.unizg.hr, andrej.jokic@fsb.unizg.hr}}
\thanks{$^{2}$ Electrical Engineering Faculty, Eindhoven University of Technology, Eindhoven, The Netherlands
        {\tt\small m.lazar@tue.nl}}
\thanks{This research has been supported by the European Regional Development Fund under grant agreement PK.1.1.10.0007 (DATACROSS).}
}
\begin{document}
\maketitle
\thispagestyle{empty}
\pagestyle{empty}
\begin{abstract}
In this paper we propose dynamic output-feedback controller synthesis methods  for discrete-time linear time-invariant systems. The synthesis goal is to achieve dissipativity with respect to a given quadratic supply rate or a given $H_2$ performance level. 
It is assumed that the model of system dynamics is unknown,
expect for the disturbance term.
Instead, we have a recorded trajectory of the control input and the state, which can be corrupted by an unknown but bounded disturbance. The state data is used only for the purpose of controller synthesis, while the designed controller is output feedback controller, i.e., the full state is not used for control in real time.   
The presented synthesis method is formulated  in terms of linear matrix inequalities parametrized by a scalar variable, while in noiseless
case it reduces to linear matrix inequalities. Within the considered setting, the synthesis procedure is non-conservative.

\end{abstract}
\section{INTRODUCTION}
\label{IntroductionLabel}

In this paper, we propose dynamic output-feedback controller synthesis method for discrete-time linear time-invariant (LTI) systems.
The synthesis goal is to render the closed-loop system dissipative with respect to a given generic unstructured quadratic supply rate, or to achieve a given $H_2$ performance level on a considered channel. 
Our approach is a direct data-driven
approach in sense that the recorded data of the system trajectories is used directly in the controller synthesis, avoiding reconstruction of the system model.
More precisely, we use \textit{informativity approach} framework, 
which has gained significant attention during past years, see, e.g,
\cite{R27} for an overview.

Data-driven approaches based on input-state trajectories
have been used to derive (stabilizing, $H_2$, $H_\infty$, quadratic performance) static state-feedback controller synthesis methods for generic LTI systems in \cite{R26, c4, c6, R5564636, R24, zz1, zz66, c5}.
 In contrast, in this paper we use the input-state data to derive dynamic \emph{output-feedback} controller synthesis method.
On the other hand, in \cite{R26}, \cite{c5, a1, zz2, R23}, the data of  input-output trajectories were used in  to derive (stabilizing, $H_2$, $H_\infty$, quadratic performance) dynamic output-feedback controller synthesis methods.
However, all these methods are limited to auto-regressive models 
in which the disturbance affects the system in a specific way (with no associated dynamics), thus, they cannot describe an arbitrary LTI system. 
Furthermore, these results require specific \textit{restriction} regarding the relation between number of outputs available for control, lag and order of minimal realization of the system.
This \textit{restriction}  can be resolved by constructing special non-minimal realization using the data as shown in \cite{R33}. In case the signal-to-noise ratio is sufficiently large, robust
controller synthesis methods for stabilization from \cite{c6} and
\cite{R5564636} can be used. However, it
is unclear how to define and to interpret disturbance term in the model of system dynamics. Therefore, it is also unclear how to make a step from stabilization only to an input-output performance optimization. 
The mentioned \textit{restriction} is also resolved in \cite{R22}, where the system stabilization based on data corrupted by a measurement error is considered.
This is achieved by  constructing an auxiliary
system which extends system realization. However, it is not clear how to construct this auxiliary system, as stated in  \cite[Sec. VI]{R33}. 
Furthermore, in both \cite{R33} and \cite{R22} the structure of the constructed system realization is not fully exploited for the purpose of controller synthesis, and therefore the obtained results are in general conservative.

In this paper, we consider a generic LTI system within the common problem setting, as defined in \cite{c6}.
More precisely, we assume that the matrices that describe the system dynamics are unknown, except for the matrix associated with the disturbance. This is the case
in many practical settings, i.e. we often know how a disturbance (or noise) affects the system dynamics.
 Furthermore, we assume that the  matrices of the output channel for
control and performance are known (as user's choice, see, e.g., \cite{c4} for a discussion).
To compensate for a partially unknown model, we assume to have one recorded system trajectory  consisting of the control input data and the corresponding system's state trajectory. The recorded data is possibly corrupted by an unknown but bounded (generic process) disturbance. 
We emphasize that the input-state data from a data recording process is only required for the purpose of controller design, while the designed controller is a dynamic output-feedback controller rather than a static state-feedback controller. 
That is, once the controller is designed, it uses only measurements of the output available for control. 
The derivation of the presented synthesis method partly follows the convexification procedure from the model-based synthesis, see e.g. \cite{R25}.
The final result is expressed in the form of linear matrix inequalities (LMIs) parametrized by a scalar variable, thus, controller parameters can be obtained by solving these LMIs multiple times in a line-search procedure. 
In the noiseless case, the synthesis conditions reduce to
LMIs.
Obtained synthesis procedures are non-conservative, more precisely, they represent necessary and sufficient conditions for a set of systems that are consistent with the recorded data and the known disturbance bound.
This work is a continuation of the $H_\infty$ control result in \cite{RT22}, using more common problem setting.

On the more practical side, the question is: in which cases are the derived methods appropriate?
 One might imagine situations where all states can be measured in some scenarios (that is the “data recording setting”, as we refer to it in the paper), but using all state measurements in real-time for control might not be convenient or allowed.
For example, in power systems, both voltage angles and frequencies are available for measurement, but in frequency control, only frequency measurements are typically used. In general, there might be a relatively large number of states, but the user desires to use only some (relatively small number) of them for control. Measurements of all states during the operation might be too expensive (many sensors are required to be integral part of the system) and requires suitable communication links form (possibly many) sensors to the controller, which might lead to fragility (i.e., to failures in sensing and communication) of the overall control system.

The considered problem setting is common for the synthesis of a static state-feedback controller, thus, our results is generalization of these methods to a dynamic output feedback controller.
Within the considered problem setting and to the best of authors knowledge, presented results are first dynamic output-feedback controller synthesis methods for dissipativity and $H_2$ performance of a generic LTI system.
To illustrate results, we use an academic example from \cite{c4} and active suspension example from \cite{MatRef}.

\subsubsection*{Notation} 
We use $M^\dagger $ to denote the Moore-Penrose pseudo-inverse
of a real matrix $M$.
For a real square matrix $M$ we use $\text{He}\{M\}$ to denote a symmetric matrix $M^\top + M$. We use $\text{diag}(A,B)$ to denote the matrix \begin{small}$\begin{pmatrix} A & 0 \\ 0 & B  \end{pmatrix}$\end{small}. 
With matrices
$A_1,...,A_n$ which have the same number of columns, we
use $\text{col}(A_1,..., A_n)$ to denote the matrix $\begin{pmatrix} A_1^\top \cdots A_n^\top \end{pmatrix}^\top$.
In a linear matrix inequality (LMI), $\star$ represents blocks which can be inferred from symmetry.  
When replacing a set of inequalities $C \prec 0$, $A-BC^{-1}B^\top \prec 0$ with a matrix inequality of the form \begin{small}$\begin{pmatrix} A & B \\ B^\top & C \end{pmatrix}\prec 0$\end{small} (and vice versa), we will say that we have \emph{used the Schur complement with respect to the matrix $C$}. In such a way we explicitly specify which block diagonal matrix is inverted. When we say that \textit{we apply a congruence transformation on a matrix inequality $A\succ 0$ with respect to the matrix $S$}, we mean that $S$ is a full rank matrix, and that the transformed inequality has the form $S^\top A S \succ 0$. 
 With $I_m$ and $0_{p_1 \times p_2}$ we denote the identity matrix of the size $m$, and the zero matrix of size $p_1 \times p_2$, respectively.
\section{Preliminaries}
\subsection{Dissipativity}
\label{SecDissipativity}
Consider a discrete time LTI system 
\begin{equation}
\label{Sys1}
\begin{pmatrix} x(t+1) \\ z(t) \end{pmatrix} =
\begin{pmatrix} A & B \\ C & D \end{pmatrix}
\begin{pmatrix} x(t) \\  w(t) \end{pmatrix} 
\end{equation}
where $t \in \mathbb{N}_{\geq 0}$, $x(t) \in \mathbb{R}^n$, $w(t) \in \mathbb{R}^{m_w}$ and $z(t) \in \mathbb{R}^{p_z}$ represent time, the system state, input and output, respectively. We say that \eqref{Sys1} is strictly dissipative with respect to a quadratic supply function 
\begin{equation}
\label{supplyFunction}
s(w(t),z(t)):=
\begin{pmatrix}
w(t) \\ z(t)
\end{pmatrix}^\top
\begin{pmatrix}
-Q & -S \\
-S^\top & -R
\end{pmatrix}
\begin{pmatrix}
w(t) \\ z(t)
\end{pmatrix}
\end{equation}
 if there exist a symmetric matrix $P$ such that 
\begin{equation}
\label{Dissip2}
\begin{pmatrix}
I & 0 \\
A & B \\
0 & I\\
C & D  \\
\end{pmatrix}^\top
\begin{pmatrix}
-P & 0 & 0 & 0  \\
0& P & 0 & 0   \\
0 & 0 & Q & S \\
0 & 0 & S^\top & R\\
\end{pmatrix}
\begin{pmatrix}
I & 0 \\
A & B \\
0 & I\\
C & D  \\
\end{pmatrix}
\prec 0,
\end{equation}
in which case $V(x(t))=x(t)^\top P x(t)$ acts as a storage function.
If in addition we have $P\succ 0$ and $R\succeq 0$, then \eqref{Dissip2} implies stability of system \eqref{Sys1} since the Lyapunov inequality $A^\top P A-P\prec 0$ is incorporated in \eqref{Dissip2}.
\begin{remark}
\label{RemarkPrvi1}
The channel $w  \rightarrow z $ achieves $H_\infty$ performance of at least $\gamma $ (with $\gamma >0$) if and only if \eqref{Dissip2} holds for
$Q=-\gamma^2 I_{m_w}$, $R = I_{p_z}$,  $S = 0$ and some $P \succ 0$.
\end{remark}
\subsection{$H_2$ performance}
\label{H2performanceSubsection}
Consider a discrete time LTI system \eqref{Sys1}
and a $H_2$ norm on the performance channel $w \rightarrow z$ denoted by $||T(e^{j\omega})||_2$, where
\begin{equation}
\label{PrijenosnaFunkcija}
T(e^{j\omega} )=C (e^{j\omega}I-A)^{-1}B+D.
\end{equation}
According to \cite{zz00}, $||T(e^{j\omega})||_{2} < \mu $ if and only if  $\text{tr}(Z)< \mu^2$ and
\begin{equation}
\label{H2LMI}
\begin{pmatrix}
P-A P A^\top & B \\
B^\top &  I
\end{pmatrix}
 \succ 0, 
 \quad 
 \begin{pmatrix}
Z-DD^\top  & CP \\
P  C^\top &  P
\end{pmatrix} \succ 0,
\end{equation}
in which case the channel $w \rightarrow z $ achives a $H_2$ performance of at least $\mu$. Furthermore,
\eqref{H2LMI} implies stability of system \eqref{Sys1} since Lyapunov inequalities are incorporated in \eqref{H2LMI}.
\subsection{Matrix S-lemma}
The following version of the matrix S-lemma has been presented in \cite[Thm. 4.10]{c21}. 
\begin{theorem}
\label{Theorem1}
Let $M, H \in \mathbb{R}^{(n+r)\times (n+r)}$ be symmetric matrices with partitions \begin{small}$H:=\begin{pmatrix} H_{11} & H_{12} \\ H_{12}^\top & H_{22} \end{pmatrix}$\end{small} where $H_{11}\in \mathbb{R}^{n \times n}$,
and let the set $S_{H}$ be defined as
\begin{equation}
S_{H}:=\{ Z\in\mathbb{R}^{r\times n} | \begin{pmatrix} I \\ Z \end{pmatrix}^\top H \begin{pmatrix} I \\ Z \end{pmatrix} \succeq 0 \}.
\end{equation}
Assume that $H_{22}\prec 0$ and $S_{H} \neq \emptyset$.
Then, we have that
\begin{equation}
\begin{pmatrix}
I \\
Z
\end{pmatrix}^\top
M
\begin{pmatrix}
I \\
Z
\end{pmatrix}\succ 0 \text{ for all } Z\in S_{H}
\end{equation}
if and only if there exists scalar $\alpha \geq 0$ such that
\begin{equation}
M-\alpha
H \succ
0. 
\end{equation}
\end{theorem}
Note that in Theorem~4.10 it is in fact required that $H_{11}-H_{12}H_{22}^\dagger H_{12}^\top \succeq 0$ (i.e., the generalized Schur complement of $H$ with respect to $H_{22}$ is positive semidefinite). This condition is equivalent to the condition that $S_{H}$ is not an empty set, as shown in \cite[page 6]{c21}.

\section{Problem definition}
\subsection{The plant}
Consider the following discrete time LTI system 
\begin{equation}
\label{eq:3:11}
\begin{pmatrix}
x(t+1) \\
z(t)\\
y(t)
\end{pmatrix}
=
\begin{pmatrix}
A & B_1 & B \\
C_1 & D_1 & E  \\
C & F & 0 
\end{pmatrix}
\begin{pmatrix}
x(t)\\
w(t)\\
u(t) \\
\end{pmatrix},
\end{equation}
where $x(t)\in \mathbb{R}^n$ is the state vector, $w(t) \in \mathbb{R}^{m_w}$ is the disturbance input, $u(t) \in \mathbb{R}^m$ is the control input, $z(t) \in \mathbb{R}^{p_z}$ is the controlled output, $y(t) \in \mathbb{R}^p$ is the measurement signal which is available for control.
Next, consider the following dynamic output-feedback controller
\begin{equation}
\label{controller}
\begin{pmatrix}
x_{c} (t+1)\\
u (t)
\end{pmatrix}
=
\begin{pmatrix}
A_{c} & B_c \\
C_c & D_c
\end{pmatrix}
\begin{pmatrix}
x_{c} (t) \\
y (t)
\end{pmatrix},
\end{equation}
where the controller is of the same order as the plant, i.e.  $x_{c} (t) \in \mathbb{R}^n$. 
The system \eqref{eq:3:11} and the controller \eqref{controller} 
form the closed-loop system
\begin{equation}
\label{closed_loop}
\begin{pmatrix}
\xi (t+1) \\
z (t)
\end{pmatrix}
=
\begin{pmatrix}
\tilde{A} & \tilde{B} \\
\tilde{C} & \tilde{D} 
\end{pmatrix}
\begin{pmatrix}
\xi (t) \\ w (t)
\end{pmatrix}
\end{equation}
where $\xi (t)=\text{col}(x (t),x_{c} (t))$  and
\begin{equation}
\label{closed_loop_matrices}
\begin{pmatrix}
\tilde{A} & \tilde{B} \\
\tilde{C} & \tilde{D} 
\end{pmatrix}
=
\begin{pmatrix}
\begin{array}{cc|c}
A+BD_cC & BC_c & B_1+BD_cF \\
B_cC & A_c & B_cF \\ \hline
C_1+ED_cC & EC_c & D_1+ED_cF 
\end{array}
\end{pmatrix}.
\end{equation}
\subsection{Problem and data recording settings}
In our problem setting we assume that the matrices:
\begin{enumerate}
\item $A$ and $B$ are unknown,
\item $B_1$, $C_1$, $D_1$, $E$, $C$ and $F$ are known, and $B_1$ has full column rank.
\end{enumerate}
In addition, we assume that we have one recorded  finite time trajectory.
Note that trajectory data can be also made of multiple recorded trajectories or their parts without affecting controller synthesis.
This data is obtained in the following data recording setting:
\begin{enumerate}
\item the system model structure \eqref{eq:3:11} is known; 
\item the bound of the unknown disturbance $w$ is known;
\item the control input $u$ and the state $x$ are known.
\end{enumerate}  
More precisely, we have knowledge of the following data matrices
\begin{equation}
\label{RecordedData}
\begin{split}
X_{+}&:=
\begin{pmatrix}
x (1) & x (2) & \cdots & x (N)
\end{pmatrix}, \\
X_- &:=
\begin{pmatrix}
x (0) & x (1) & \cdots & x(N-1)
\end{pmatrix}, \\
U_-&:=
\begin{pmatrix}
u (0) & u (1) & \cdots & u(N-1)
\end{pmatrix}, 
\end{split}
\end{equation} 
which satisfy the equation
\begin{equation}
\label{eq:2:8}
X_{+}
= A X_-+BU_-+B_1W_-, 
\end{equation}
where the matrix $W_-$ represents an unknown disturbance data   
\begin{equation}
\label{NoiseData}
W_{-}
:=
\begin{pmatrix}
w (0) & w (1) & \cdots & w(N-1)
\end{pmatrix}.
\end{equation}

\subsection{The disturbance model}
\label{The disturbance modelSubsection}
The disturbance affecting data recording process is assumed to be bounded. This is modeled using a quadratic matrix inequality (QMI) imposed on $W_-$. 
\begin{assumption}
\label{Assumption1}
The matrix $W$ from \eqref{NoiseData} satisfies the inequality  
\begin{equation}
\label{eq:2:6}
\begin{pmatrix}
I \\
W^\top\\
\end{pmatrix}^\top
\underbrace{
\begin{pmatrix}
\Phi_{11} & \Phi_{12}\\
\Phi_{12}^\top & \Phi_{22}
\end{pmatrix}}_\Phi
\begin{pmatrix}
I \\
W^\top\\
\end{pmatrix} \succeq 0,
\end{equation}
where $\Phi=\Phi^\top \in \mathbb{R}^{(m_w+N)\times(m_w+N)}$ is a known matrix with $\Phi_{22} \prec 0$.~$\hfill{\Box}$   
\end{assumption}
Note that $\Phi_{22} \prec 0$ ensures that the set of matrices $W_-$ which satisfy \eqref{eq:2:6} is bounded \cite{c6}.

\subsection{The set of plants consistent with the data}
Let $\Sigma$ be the set of all pairs $(A, B)$ which can explain the data
($X_+$,$X_-$,$U_-$), that is
\begin{equation}
\label{SystemSet}
\begin{split}
\Sigma :=
\{(A,B) \,| 
 \eqref{eq:2:8} \text{ holds for some $W_-$ satisfying } \eqref{eq:2:6} \}. 
\end{split}
\end{equation}
The following lemma is used to express the set $\Sigma$ in a (QMI) form  suitable for the application of the matrix S-lemma. 
\begin{lemma}
\label{Lemma3}
The set $\Sigma$ is the set of all pairs ($A$,$B$) that satisfy the following inequality
\begin{equation}
\label{eq:2:3}
\begin{pmatrix}
I   \\ 
A^\top  \\
B^\top   
\end{pmatrix}^\top
\underbrace{
\begin{pmatrix}
H_{11}  & H_{12}  & H_{12} \\
H_{12}^\top & H_{22}  & H_{23} \\
H_{12}^\top & H_{23}^\top & H_{33}
 \end{pmatrix}}_{H}
\begin{pmatrix}
I   \\ 
A^\top  \\
B^\top 
\end{pmatrix} \succeq 0,
\end{equation}
where
\begin{equation}
\label{PhiDef}
H
:=
\begin{pmatrix}
\star
\end{pmatrix}
\begin{pmatrix}
\Phi_{11} &  \Phi_{12}\\ 
\Phi_{12}^\top  & \Phi_{22}
\end{pmatrix}
\begin{pmatrix}
\begin{array}{c|c|c}
 B_1^\top & 0 & 0\\
X_+^\top & -X_-^\top & -U_-^\top
\end{array}
\end{pmatrix}.
\end{equation}~$\hfill{\Box}$  
\end{lemma}
\begin{proof}
If we multiply \eqref{eq:2:6} with matrices $B_1$ and $B_1^\top$ from the left and right side, respectively, then we obtain an equivalent matrix inequality since matrix $B_1$ has a full column rank. Next, if we substitute  $B_1W_-$  using \eqref{eq:2:8} we obtain \eqref{eq:2:3}.
\end{proof}

The following statement ensures that all assumptions regarding the matrix S-lemma are met.
\begin{assumption}
\label{Assumption2} The set $\Sigma$ is a nonempty set and we have that
$\text{rank}(\text{col}(X_-,U_-))=n+m.$~$\hfill{\Box}$   
\end{assumption}
According to \cite{c1}, this rank condition is achieved when the system is controllable  and
persistently exciting inputs are used during the data recording process.
More precisely, this is true if pair $(A, B)$ is controllable and input $u$ is 
persistently exciting of order $n+1$.
Note that Assumptions~\ref{Assumption2} implies that the set $\Sigma$ is bounded.

The following definition is used to construct controller synthesis procedures.
\begin{definition}
\label{Definicija1}
Consider \eqref{eq:2:3} and let 
$\Lambda \in \mathbb{R}^{\tilde{n} \times \tilde{n}}$ be a positive definite matrix defined by arbitrary factorization
\begin{equation}
 \nonumber
\begin{pmatrix}
\star \\
\end{pmatrix}
H
\begin{pmatrix}
I & A_s & B_s \\
\end{pmatrix}^\top=\Xi \Lambda \Xi^\top,
\end{equation}
where
\begin{equation}
\label{set_element}
\begin{pmatrix}
A_s & B_s \\
 \end{pmatrix}^\top
 :=
 -\begin{pmatrix}
  H_{22} &  H_{23} \\
 H_{23}^\top & H_{33}
\end{pmatrix}^{-1}
\begin{pmatrix}
H_{12}^\top \\
H_{13}^\top 
\end{pmatrix}.
\end{equation}~$\hfill{\Box}$
\end{definition} 
According to the argumentation in \cite[expr. (3.4)]{c21},
 equation \eqref{set_element}  can be used to compute a
pair $(A_s,B_s) \in \Sigma$.

\subsection{The control problem}
\label{TheControlProblem}
In this paper we solve the following control problem.
\begin{problem}
\label{Problem1}
Consider the system \eqref{eq:3:11}, and suppose that we have the following knowledge: matrices $B_1$, $C_1$, $D_1$, $E$, $C$ and $F$; the recorded data ($X_+$,$X_-$,$U_-$) defined as in \eqref{RecordedData};  the disturbance bound specified by $\Phi$ in \eqref{eq:2:6}. 
Suppose that Assumptions~\ref{Assumption1} and \ref{Assumption2} hold.
Design a dynamic output-feedback controller \eqref{controller} such that the closed-loop system \eqref{closed_loop} achieves one of the following two goals:
\begin{enumerate}
 \item[\emph{a})]
is stable and strictly dissipative
 with respect to a given supply function \eqref{supplyFunction} for all $(A, B) \in \Sigma$.  
 \item[\emph{b})] 
 achieves a given $H_2$ performance $\mu$ on a given  channel $w \rightarrow z$ for all $(A, B) \in \Sigma$ .~$\hfill{\Box}$
\end{enumerate}     
\end{problem}
Note that disturbance $w$ in practice can be 
arbitrary, 
 but during the data recording we desire to keep it small to reduce the set $\Sigma$ and therefore to possibly achieve better performance levels of the closed-loop 
 (e.g. $H_\infty$ or $H_2$ performance).     
\section{Controller synthesis}
\label{SolutionProblem1}
Here we present a solution to the Problem~\ref{Problem1}
in the form of two theorems which present the main results of this paper.
 Note that the statements in these theorems are in accordance with  dissipativity and $H_2$ performance description presented in Sections~\ref{SecDissipativity} and \ref{H2performanceSubsection}, respectively.
 
\begin{figure*}[b]
 \par\noindent\rule{\textwidth}{0.5pt}
\begin{equation}
\tag{CS}
\label{eq:3:5}
\begin{gathered}
\Pi:=
\begin{pmatrix}
\Pi_{11} & \Pi_{12} \\
\Pi_{12}^\top & \Pi_{22} 
\end{pmatrix}, \quad
\Pi_{11}:=
\begin{pmatrix}
X & I & \star  & \star   \\
I & Y & \star   & \star\\
-S(C_1+ED_cC) &  -S(C_1Y+E\tilde{C}_c) & -Q-\text{He}(S(D_1+ED_cF))  &   \star \\
T^\top(C_1+ED_cC) & T^\top(C_1Y+E\tilde{C}_c) & T^\top(D_1+ED_cF) & \tilde{R}^{-1}
\end{pmatrix}, \\
\Pi_{12}^\top:= 
\begin{pmatrix}
A_s+B_sD_cC & A_sY+B_s\tilde{C}_c  & B_1+B_sD_cF & 0_{n\times \tilde{p}_z} \\
XA_s+\tilde{B}_cC& \tilde{A}_c   &  XB_1 +\tilde{B}_cF  & 0 \\
I & Y & 0 & 0 \\
D_cC & \tilde{C}_c & D_cF & 0 \\
0_{\tilde{n} \times n} & 0 & 0 & 0
\end{pmatrix}, 
\\
\Pi_{22}:=
\begin{pmatrix}
Y & I & -\alpha (H_{12}+A_s  H_{22}+B_s H_{23}^\top) & -\alpha (H_{13}+A_s  H_{23}+B_s H_{33})  &  \alpha \Xi \\
I & X &  -\alpha X(H_{12}+A_s  H_{22}+B_s   H_{23}^\top) & -\alpha X(H_{13}+A_s  H_{23}+B_s H_{33})  &  \alpha X \Xi  \\
\star & \star &   -\alpha  H_{22} & -\alpha  H_{23} & 0 \\
\star & \star &  \star & -\alpha  H_{33} & 0  \\
\star & \star &  \star & \star &  \alpha \Lambda^{-1}  \\
\end{pmatrix}, \\
\Delta :=
\begin{pmatrix}
I & D_1^\top+F^\top D_c^\top E^\top & 0 & 0 \\
\star & Z & C_1+ED_cC & C_1 Y+E \tilde{C}_c \\
\star & \star & X & I \\
\star & \star & \star & Y
\end{pmatrix}.
\end{gathered}
\end{equation}
\end{figure*}
\begin{theorem}
\label{Theorem2}
Consider the Problem~\ref{Problem1}a, the Definition~\ref{Definicija1},
 and  the matrix  $\Pi$ defined in expression \eqref{eq:3:5} presented at the bottom of next page. Let  $R\succeq 0$ and $\tilde{R} \in \mathbb{R}^{\tilde{p}_z \times \tilde{p}_z  }$ be a positive definite matrix defined by arbitrary factorization  $R=T\tilde{R} T^\top$.
Then, the following two statements are equivalent:
\begin{enumerate}
 \item[\emph{i})]
The inequalities
 \begin{subequations}
 \label{eq:3:4}
 \begin{align}
 & P \succ 0, \label{E1} \\ 
 & \begin{pmatrix}
\star
\end{pmatrix}^\top
\begin{pmatrix}
-P & 0 & 0 & 0  \\
0& P & 0 & 0   \\
0 & 0 & Q & S\\
0 & 0 & S^\top & R\\
\end{pmatrix}\begin{pmatrix}
I & 0 \\
\tilde{A} & \tilde{B} \\
0 & I\\
\tilde{C} & \tilde{D}  \\
\end{pmatrix}
\prec 0, \label{E2}
 \end{align}
 \end{subequations}
hold for all $(A,B)\in \Sigma$.
\item[\emph{ii})]
The inequality
\begin{equation}
\label{eq:600:301}
\Pi \succ 0,
\end{equation}
holds for some real scalar $\alpha$, symmetric matrices $X$ and $Y$, unstructured matrices $\tilde{A}_c$, $\tilde{B}_c$, $\tilde{C}_c$ and $D_c$. 
\end{enumerate}
Furthermore, the controller parameters can be reconstructed from \eqref{eq:600:301} as follows
\begin{equation}
\label{eq:3:18}
\begin{gathered}
V^\top=U^{-1}(I-XY), \\
B_c=U^{-1}(\tilde{B}_c-XB_sD_c), \\
C_c=(\tilde{C}_c-D_cCY)V^{-\top}, \\
A_c=U^{-1}(\tilde{A}_c-X(A_sY+B_s\tilde{C}_c)-UB_cCY)V^{-\top},
\end{gathered}
\end{equation}
where $U$ is an arbitrary full rank matrix. 
\end{theorem}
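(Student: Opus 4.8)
The plan is to prove i)\,$\Leftrightarrow$\,ii) by a chain of equivalences assembled from three ingredients: (a) a change of controller variables, combined with congruence transformations and Schur complements performed at the \emph{nominal} pair $(A_s,B_s)$, which linearizes the products of the closed-loop storage matrix with the controller data and exposes the true pair $(A,B)$ as an affine perturbation of $(A_s,B_s)$; (b) the matrix $S$-lemma (Theorem~\ref{Theorem1}), used to eliminate the quantifier ``for all $(A,B)\in\Sigma$''; (c) reversing the auxiliary Schur complements to assemble the block $\Pi$. Reading the chain backwards recovers, from any solution of ii), the controller \eqref{eq:3:18} that certifies i) (here the storage matrix $P$ in i) is common to all $(A,B)\in\Sigma$, as ``$P\succ0$ for all $(A,B)$'' refers to one fixed $P$); the only non-reversible step is a normalization used in the forward direction, where --- the inequalities of i) being strict and $\Sigma$ compact --- $P$ may be replaced by a nearby matrix whose off-diagonal block has full rank without invalidating i).

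For (a), partition $P$ and $P^{-1}$ conformably with $\xi_k=\text{col}(x_k,x_{c,k})$, writing $X,Y$ for their leading $n\times n$ blocks and $U,V$ for their $(1,2)$-blocks, so $UV^\top=I-XY$, and form $\Gamma_1=\left(\begin{smallmatrix}Y & I\\ V^\top & 0\end{smallmatrix}\right)$, $\Gamma_2=P\Gamma_1=\left(\begin{smallmatrix}I & X\\ 0 & U^\top\end{smallmatrix}\right)$, so that $\Gamma_1^\top P\Gamma_1=\left(\begin{smallmatrix}Y & I\\ I & X\end{smallmatrix}\right)$; for invertible $U,V$ this turns \eqref{E1} into positive definiteness of this $2\times2$ block, which forces $I-XY$ to be invertible. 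Keeping $D_c$ and setting $\tilde C_c:=C_cV^\top+D_cCY$, $\tilde B_c:=UB_c+XB_sD_c$, $\tilde A_c:=UA_cV^\top+UB_cCY+XB_sC_cV^\top+X(A_s+B_sD_cC)Y$, the congruence of the closed-loop matrices \eqref{closed_loop_matrices} by $\Gamma_1,\Gamma_2$ decomposes, for every $(A,B)$, into a part affine in $(X,Y,\tilde A_c,\tilde B_c,\tilde C_c,D_c)$ plus a linear-fractional term $\left(\begin{smallmatrix}I\\ X\end{smallmatrix}\right)\!\big[(A-A_s)\ \ (B-B_s)\big]\!\left(\begin{smallmatrix}Y & I\\ \tilde C_c & D_cC\end{smallmatrix}\right)$ (with an $F$-dependent companion). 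A Schur complement of \eqref{E2} removing $\tilde A,\tilde B,\tilde C,\tilde D$ from quadratic position, together with the split $R=T\tilde R T^\top$ taken out by a Schur complement with respect to $\tilde R^{-1}$ (needed since only $R\succeq0$ is assumed), reduces i) to the requirement that a single matrix inequality --- affine in the new variables and at most affine in $[(A-A_s)\ (B-B_s)]$ --- hold for all $(A,B)\in\Sigma$; equivalently $\text{col}(I,Z^\top)^\top M\,\text{col}(I,Z^\top)\succ0$ for all $Z:=\text{col}\big((A-A_s)^\top,(B-B_s)^\top\big)$ in the associated set.

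For (b), describe $\Sigma$ by \eqref{eq:2:3} (Lemma~\ref{Lemma3}) and re-centre this quadratic matrix inequality at $(A_s,B_s)$: a congruence of $\tilde\Phi$ by the block matrix with identity diagonal and $(1,2),(1,3)$-blocks $A_s,B_s$ gives a matrix $K$ whose blocks are exactly those in $\Pi_{22}$, its $(1,1)$-block being $(\star)\tilde\Phi(I\ A_s\ B_s)^\top=\Xi\Lambda\Xi^\top$ --- a factorization that exists with $\Lambda\succ0$ because $(A_s,B_s)\in\Sigma$ makes that term positive semidefinite via \eqref{eq:2:3}. Theorem~\ref{Theorem1} now applies to $M$, $K$ and this $Z$: the lower-right $(n+m)$-block of $K$ equals $\text{col}(X_-,U_-)\Phi_{22}\text{col}(X_-,U_-)^\top\prec0$ (from $\Phi_{22}\prec0$ of Assumption~\ref{Assumption1} and the rank condition of Assumption~\ref{Assumption2}), and its generalized Schur complement in $K$, which the re-centring congruence leaves unchanged, equals the quantity appearing in Lemma~\ref{Lemma3} and is positive semidefinite since $\Sigma\neq\emptyset$. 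Hence ``for all $(A,B)\in\Sigma$'' is equivalent to the existence of $\alpha\ge0$ with $M-\alpha K\succ0$.

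For (c), reverse the auxiliary Schur complements: the negative semidefinite block $-\alpha\Xi\Lambda\Xi^\top$ contributed by $-\alpha K$ is exchanged, by a Schur complement with respect to $\alpha\Lambda^{-1}$, for the last block row and column of $\Pi_{22}$; the pulled-out $\tilde A,\tilde B,\tilde C,\tilde D$-block and the $\tilde R$-split reappear as $\Pi_{12}$ and the $\tilde R^{-1}$-block of $\Pi_{11}$; and a final congruence reassembles the whole array into $\Pi\succ0$, i.e.\ ii). Solving the defining relations for $C_c,B_c,A_c$ and $UV^\top=I-XY$ for $V$, with $U$ an arbitrary full-rank matrix (so that $V$ is invertible, $I-XY$ being invertible), yields \eqref{eq:3:18}. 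I expect the bulk of the work, and the main obstacle, to be in steps (a) and (c): orchestrating the congruences and Schur steps so that the controller data gets linearized \emph{and} the uncertainty collapses into the single affine block that Theorem~\ref{Theorem1} can act on, while correctly routing the degenerate cases ($R$ and $(\star)\tilde\Phi(I\ A_s\ B_s)^\top$ possibly singular) through the factorizations $R=T\tilde R T^\top$ and $(\star)\tilde\Phi(I\ A_s\ B_s)^\top=\Xi\Lambda\Xi^\top$; verifying the hypotheses of the matrix $S$-lemma is the remaining delicate point, handled by the two observations in (b).
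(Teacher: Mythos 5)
Your proposal assembles the correct ingredients (the Scherer-type change of controller variables, Lemma~\ref{Lemma3}, recentring at $(A_s,B_s)$, Theorem~\ref{Theorem1}, the factorizations $\Xi\Lambda\Xi^\top$ and $R=T\tilde R T^\top$, a normalization making $U$, $V$ invertible, and the reconstruction \eqref{eq:3:18}), but the order in which you deploy them breaks the decisive step. After your congruence with $\Gamma_1$ and $\Gamma_2=P\Gamma_1$, the uncertainty enters as $\mathrm{col}(I,X)\,\bigl[(A-A_s)\;\;(B-B_s)\bigr]\,(\cdots)$: it occupies \emph{two} block rows of the transformed inequality and is multiplied by the decision variable $X$. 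Consequently there is no $Z$-free block whose Schur complement compresses the condition into the $n\times n$ form $\begin{pmatrix} I & Z^\top\end{pmatrix} M \begin{pmatrix} I & Z^\top\end{pmatrix}^\top\succ 0$ with $M$ independent of $Z$, which is what Theorem~\ref{Theorem1} requires; the ``equivalently'' closing your step (a) is exactly where the argument fails. In that structured situation one would need a full-block S-procedure, and its losslessness (needed for the claimed \emph{necessity}) is not available from the paper's toolbox. The paper avoids this by the opposite ordering: the certificate/controller-variable congruence is performed \emph{last}. It first isolates $(A,B)$ in a single $n$-row block (the swap $S_1$ followed by a Schur complement with respect to $\tilde\Theta_{11}$), reaching $\Gamma\bigl(\mathrm{diag}(Y,0)-\tilde\Gamma\tilde\Theta_{11}^{-1}\tilde\Gamma^\top\bigr)\Gamma^\top\succ 0$ with $\Gamma=\begin{pmatrix} I & A & B\end{pmatrix}$, applies Theorem~\ref{Theorem1} there, and only afterwards recentres at $(A_s,B_s)$ (congruence $S_2$) and applies the linearizing congruence $S_3$; the $\alpha X$ products in $\Pi_{22}$ of \eqref{eq:3:5} are the visible trace of this ordering.

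The rest of your plan is sound and coincides with the paper's: the recentred data QMI (your $K$) indeed has lower-right block $\mathrm{col}(X_-,U_-)\Phi_{22}\,\mathrm{col}(X_-,U_-)^\top\prec 0$ by Assumptions~\ref{Assumption1} and \ref{Assumption2}, its generalized Schur complement is unchanged by the recentring and nonnegative because $\Sigma\neq\emptyset$, and your perturbation of $P$ (uniform over the compact set $\Sigma$, thanks to strictness) plays the same role as the paper's controller-realization lemma in the Appendix; note that once $U$ is invertible, $V=-X^{-1}UM_2$ is automatically invertible, so no separate argument for $I-XY$ is needed on the necessity side, while on the sufficiency side $\Pi\succ 0$ yields it directly. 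If you reorder so that the quantifier over $(A,B)$ is eliminated \emph{before} the certificate congruence (and note that the resulting inequality forces $\alpha>0$, which legitimizes the Schur complement with respect to $\alpha\Lambda^{-1}$), your chain of equivalences, read backwards with \eqref{eq:3:18}, matches the paper's proof.
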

\begin{proof}
The proof is divided in several steps. 
In step 1, we rewrite \eqref{eq:3:4} in a form suitable for applying Theorem~\ref{Theorem1}. In step 2, we apply Theorem~\ref{Theorem1}. In step 3, we transform derived conditions to obtain \eqref{eq:600:301} and \eqref{eq:3:18}, thus, prove necessity. In step 4, we provide the final argumentation for sufficiency.

\noindent \emph{Step 1.}
Suppose that  \eqref{eq:3:4} holds.
By applying the Schur complement rule on \eqref{E2} with respect to the matrix $\text{diag}(-\tilde{R},-P)$, we obtain the equivalent expression to \eqref{eq:3:4}:
\begin{equation}
\label{eq:3:12}
\Theta:=
\begin{pmatrix}
\Theta_{11} & \star\\  
\begin{pmatrix}
T^\top \tilde{C} & T^\top \tilde{D} \\
\tilde{A} & \tilde{B}
\end{pmatrix} & \begin{pmatrix}
\tilde{R}^{-1} & 0 \\
0 & P^{-1}
\end{pmatrix}
\end{pmatrix}
 \succ 0,
\end{equation}
where 
\begin{equation}
\nonumber
\Theta_{11}:=
\begin{pmatrix}
\star
\end{pmatrix}^\top
\begin{pmatrix}
P & 0 & 0 \\
0 & -Q & -S \\
0 & -S^\top & 0
\end{pmatrix}
\begin{pmatrix}
I & 0 \\
0 & I \\
\tilde{C} & \tilde{D}
\end{pmatrix}.
\end{equation}
Let 
\begin{equation}
\label{P_MatrixStructure}
\begin{gathered}
P:=
\begin{pmatrix}
X  & U \\
U^\top & M_{1}
\end{pmatrix}, \quad 
P^{-1}:=
\begin{pmatrix}
Y & V \\
V^\top & M_{2}
\end{pmatrix}
, \nonumber 
\end{gathered}
\end{equation}
where matrices $X$, $M_1$, $Y$ and $M_2$ are of size $n\times n$.
Note that according to the multiple controller realization argumentation shown in 
the Appendix, Section~\ref{AppendixA}, we can choose the matrix $U$. For the purpose of constructing controller synthesis we
assume that $U$ is an arbitrary regular matrix.

Next, we apply a congruence transformation on \eqref{eq:3:12} with respect to the matrix
$S_1:=\text{diag }(I,\begin{small} \begin{pmatrix} 0 & I_n \\ I_{n} & 0 \end{pmatrix} \end{small} )$ in order to obtain the equivalent inequality $\tilde{\Theta}:=S_1^\top\Theta S_1\succ 0$. Let $\tilde{\Theta}_{11}$ be the upper left block diagonal matrix of $\tilde{\Theta}$ with size $(3n+m_w+\tilde{p}_z) \times  (3n+m_w+\tilde{p}_z)$. Then, by applying the Schur complement rule on $\tilde{\Theta} \succ 0$ with respect to the matrix $\tilde{\Theta}_{11}$, we obtain the equivalent condition
\begin{equation}
\label{eq:300:13}
\Gamma
(\begin{pmatrix} Y & 0 \\ 0 & 0_{n+m} \end{pmatrix}-
\tilde{\Gamma} \tilde{\Theta}_{11}^{-1} 
\tilde{\Gamma}^\top
)
\Gamma^\top 
\succ 0, \quad \tilde{\Theta}_{11}\succ 0,
\end{equation}
where $\Gamma:=\begin{pmatrix} I & A & B \end{pmatrix}$ and
\begin{equation}
\nonumber
\tilde{\Gamma}:=
\begin{pmatrix}
0 & 0 & B_1 & 0_{n\times \tilde{p}_z} & V \\
I & 0 & 0 & 0 & 0  \\
D_cC & C_c & D_cF & 0 & 0 \\
\end{pmatrix}.
\end{equation}

\noindent \emph{Step 2.}
Note that according to Lemma~\ref{Lemma3}, inequality \eqref{eq:2:3} defines the set of all pairs
$(A,B) \in \Sigma$. Furthermore, the first matrix inequality in \eqref{eq:300:13}
and QMI \eqref{eq:2:3}
 are in a form suitable for the application of Theorem~\ref{Theorem1}.
Also, Assumptions~\ref{Assumption1}, and \ref{Assumption2} imply that the matrix \begin{small} $\begin{pmatrix}
H_{22} & H_{23} \\ 
H_{23}^\top & H_{33} \end{pmatrix}$ \end{small} defined in \eqref{eq:2:3} is negative definite, thus, $\Sigma$ is a nonempty and bounded  set.
Therefore, we can use the Theorem~\ref{Theorem1} to state that \eqref{eq:300:13} holds for all pairs $(A,B) $ that satisfy the QMI \eqref{eq:2:3}, if and only if there exist $\alpha \geq 0$  such that
\begin{equation}
\label{eq:3:15}
\begin{pmatrix} Y & 0 \\ 0 & 0_{n+m} \end{pmatrix}-\alpha H -
\tilde{\Gamma} \tilde{\Theta}_{11}^{-1} 
\tilde{\Gamma}^\top 
 \succ 0, \quad \tilde{\Theta}_{11} \succ 0.
\end{equation}

\noindent \emph{Step 3.}
Recall the Definition~\ref{Definicija1}. By applying a congruence transformation on the first inequality in \eqref{eq:3:15} with respect to the matrix
\begin{equation}
S_2:=
\begin{pmatrix}
 I & 0 & 0 \\
 A_s^\top & I & 0 \\
 B_s^\top & 0 & I
\end{pmatrix}, \nonumber
\end{equation}
we obtain the equivalent expression
\begin{equation}
\label{eq:333:665}
\begin{pmatrix} Y & 0 \\ 0 & 0_{n+m} \end{pmatrix}-\alpha \bar{H} -
\bar{\Gamma} \tilde{\Theta}_{11}^{-1} 
\bar{\Gamma}^\top 
 \succ 0, \quad \tilde{\Theta}_{11} \succ 0,
\end{equation}
where
\begin{equation}
\nonumber
\begin{split}
\bar{H}:&=
\begin{pmatrix}
 \Xi \Lambda \Xi^\top & \star &   \star  \\ 
  H_{12}^\top+ H_{22}A_s^\top+ H_{23}B_s^\top 
 &  H_{22} &  \star \\
  H_{13}^\top+H_{23}^\top A_s^\top +H_{33} B_s^\top &  H_{23}^\top & H_{33}
\end{pmatrix}, \\
\bar{\Gamma}:&=
\begin{pmatrix}
A_s+B_sD_cC & B_sC_c & B_1+B_sD_cF & 0_{n\times \tilde{p}_z} & V \\
I & 0 & 0 & 0 & 0 \\
D_cC & C_c & D_cF & 0 & 0
\end{pmatrix}.
\end{split}
\end{equation}
By applying the Schur complement rule on \eqref{eq:333:665} first with respect to the matrix $\tilde{\Theta}_{11}^{-1}$, and then to the matrix $\frac{1}{\alpha} \Lambda$ (note that $\eqref{eq:333:665}$ implies that $\alpha>0$), we obtain the equivalent expression
\begin{equation}
\label{eq300:600}
\begin{pmatrix}
\tilde{\Theta}_{11}  & \bar{\Gamma}^\top  & 0 \\
\bar{\Gamma} & \bar{H}_{\Lambda}  & \Gamma_\Lambda^\top \\
0 & \Gamma_\Lambda & \alpha \Lambda^{-1}
\end{pmatrix} \succ 0, \quad 
\end{equation}
where $\bar{H}_{\Lambda}:=\text{diag}(Y,0_{n+m})-\alpha \underline{H}$, $\underline{H}$ is the matrix $\bar{H}$ with a zero matrix instead of the matrix $\Xi \Lambda \Xi^\top$, and $\Gamma_\Lambda:=\begin{pmatrix} \alpha \Xi^\top & 0 \end{pmatrix}$.

Recall that $U$ is an arbitrary regular matrix and note that 
\begin{equation}
\label{eq:rekonstrukcija}
P P^{-1}=
\begin{pmatrix} 
XY+UV^\top & XV+UM_2\\
U^\top Y+M_1V^\top  & U^\top V+M_1M_2
\end{pmatrix}=
\begin{pmatrix}
I & 0 \\
0 & I
\end{pmatrix}.
\end{equation}
Thus, if we apply a congruence transformation on \eqref{eq300:600} with respect to matrix
\begin{equation}
\nonumber
S_3:=
\text{diag}( 
I_{2n+m_w+\tilde{p}_z}, 
\begin{pmatrix}
0 & U^\top \\
I & X 
\end{pmatrix},
I_{n+m+\tilde{n}}
),
\end{equation}
it follows that
\begin{equation}
\label{Statement111}
\begin{pmatrix}
0 & I\\
U & X \\
\end{pmatrix}
\begin{pmatrix}
M_2 & V^\top \\
V & Y
\end{pmatrix}
\begin{pmatrix}
0 & U^\top \\
I & X
\end{pmatrix}
=
\begin{pmatrix}
Y & I \\
I & X
\end{pmatrix} \succ 0.
\end{equation}
Next, if we use the Schur complement rule on \eqref{Statement111} with respect to the matrix $X$, we obtain the equivalent statement $Y-X^{-1}\succ 0$ and $X\succ 0$. From there it follows that $I-XY$ is a non-singular matrix.  Since \eqref{eq:rekonstrukcija} implies that $V^\top =U^{-1}(I-XY)$, we can conclude that the inequality \eqref{eq300:600} implies that the matrix $V$ has full rank.
Therefore, we can apply a congruence transformation on \eqref{eq300:600} with respect to matrix
\begin{equation}
\nonumber
S_4:=
\text{diag}( 
\begin{pmatrix}
I & Y \\
0 & V^\top
\end{pmatrix}
, I_{m_w+\tilde{p}_z}, 
\begin{pmatrix}
0 & U^\top \\
I & X 
\end{pmatrix},
I_{n+m+\tilde{n}}
),
\end{equation}
to obtain the equivalent inequality \eqref{eq:600:301}, this proves necessity. In order to obtain \eqref{eq:600:301} and \eqref{eq:3:18}, we used \eqref{eq:rekonstrukcija}, \eqref{Statement111} and the following set of equalities 
\begin{equation}
\nonumber
\begin{pmatrix}
I & 0 \\
Y & V
\end{pmatrix} 
\begin{pmatrix}
X & U  \\
U^\top & M_{1} 
\end{pmatrix}
\begin{pmatrix}
I & Y\\
0 & V^T
\end{pmatrix} 
=
\begin{pmatrix}
X & I\\
I & Y
\end{pmatrix},
\end{equation}
\begin{equation}
\nonumber
\tilde{C}
\begin{pmatrix}
I & Y \\
0 & V^\top
\end{pmatrix}=
\begin{pmatrix}
C_1+ED_cC & C_1Y+E\tilde{C}_c
\end{pmatrix}, 
\end{equation}
\begin{equation}
\nonumber
\tilde{C}_c:=D_cCY+C_cV^\top,
\end{equation}
\begin{equation}
\nonumber
\begin{pmatrix}
I & 0 \\
D_cC & C_c
\end{pmatrix}
\begin{pmatrix}
I & Y \\
0 & V^\top
\end{pmatrix}=
\begin{pmatrix}
I & Y\\
D_cC & \tilde{C}_c
\end{pmatrix},
\end{equation}
\begin{equation}
\nonumber
\begin{split}
&
\begin{pmatrix}
0 & I \\
U & X
\end{pmatrix}
\begin{pmatrix}
B_cC & A_c \\
A_s+B_sD_cC & B_sC_c
\end{pmatrix}
\begin{pmatrix}
I & Y \\
0 & V^\top
\end{pmatrix}= \\
&
\begin{pmatrix}
A_s+B_sD_cC & A_sY+B_s\tilde{C}_c \\
XA_s+\tilde{B}_cC& \tilde{A}_c  
\end{pmatrix},
\end{split}
\end{equation}
\begin{equation}
\nonumber
\tilde{B}_c:=
XB_sD_c+UB_c, 
\end{equation}
\begin{equation}
\nonumber
\tilde{A}_c :=X(A_sY+B_s\tilde{C}_c)+U(B_cCY+A_cV^\top),
\end{equation}
\begin{equation}
\nonumber
\begin{pmatrix}
0 & I \\
U & X
\end{pmatrix}
\begin{pmatrix}
 B_cF
    \\ B_1+B_sD_cF
 \end{pmatrix}= \begin{pmatrix}
B_1+B_sD_cF \\ XB_1 +\tilde{B}_cF
\end{pmatrix},
\end{equation}
\begin{equation}
\nonumber
\begin{split}
\bar{H}_{12\alpha}:&= -\alpha (H_{12}+A_s  H_{22}+B_s   H_{23}^\top),
\\
\bar{H}_{13\alpha}:& =-\alpha (H_{13}+A_s  H_{23}+B_s   H_{33}),
\end{split}
\end{equation}
\begin{equation}
\nonumber
\begin{pmatrix}
0 & I \\
U & X
\end{pmatrix}
\begin{pmatrix}
0 & 0  \\
\bar{H}_{12\alpha} & \bar{H}_{13\alpha}  
 \end{pmatrix}= 
 \begin{pmatrix}
\bar{H}_{12\alpha} & \bar{H}_{13\alpha}  \\
X\bar{H}_{12\alpha} & X\bar{H}_{13\alpha}  
 \end{pmatrix} ,
\end{equation}
\begin{equation}
\nonumber
\begin{pmatrix}
0 & I \\
U & X
\end{pmatrix}
\begin{pmatrix}
 0 \\
\alpha \Xi
 \end{pmatrix}= 
 \begin{pmatrix}
 \alpha \Xi \\
\alpha X\Xi
 \end{pmatrix}.
\end{equation}

\noindent \emph{Step 4.}
Suppose that \eqref{eq:600:301} and \eqref{eq:3:18} hold.
Therefore, we can construct matrix $V$ and controller matrices using \eqref{eq:3:18} and reverse the argumentation to prove sufficiency. This establishes the equivalence between statements \emph{i}) and \emph{ii}).
\end{proof}
\begin{theorem}
\label{Theorem2}
Consider the Problem~\ref{Problem1}b, the Definition~\ref{Definicija1}, and matrices  $\Pi$ and $\Delta $ defined in expression \eqref{eq:3:5} which is presented on the previous page. 
Let $Q=-I_{m_w}$ and\footnote{Note that $\tilde{p}_z$ appears in \eqref{eq:3:5} where it defines sizes of certain matrices. With $\tilde{p}_z=0$ we simply mean that these matrices are omitted.} $\tilde{p}_z=0$.
Then, the following two statements are equivalent:
\begin{enumerate}
 \item[\emph{i})]
The inequalities
 \begin{subequations}
 \label{eq:3:4_H2}
 \begin{align}
 \text{tr}(Z)  < \mu^2,& \label{E1_H2} \\ 
  \begin{pmatrix}
I & \tilde{B}^\top \\
\tilde{B} &  P-\tilde{A} P \tilde{A}^\top
\end{pmatrix}
  \succ 0,&  \label{E2_H2} \\
   \begin{pmatrix}
Z-\tilde{D}\tilde{D}^\top  &  \tilde{C}P \\
P  \tilde{C}^\top &  P
\end{pmatrix}  \succ 0, & \label{E3_H2} 
 \end{align}
 \end{subequations}
hold for all  $(A,B)\in \Sigma$.
\item[\emph{ii})]
The inequalities 
\begin{equation}
\label{eq:600:301_H2}
 \text{tr}(Z)  < \mu^2, \quad \Pi \succ 0 , \quad \Delta \succ 0,
\end{equation}
hold for some real scalar $\alpha$, symmetric matrices $X$ and $Y$, unstructured matrices $\tilde{A}_c$, $\tilde{B}_c$, $\tilde{C}_c$ and $D_c$.
\end{enumerate}
Furthermore, the controller parameters can be reconstructed from \eqref{eq:600:301_H2} as follows
\begin{equation}
\label{eq:3:18_H2}
\begin{gathered}
U=(I-XY)V^{-\top}, \\
B_c=U^{-1}(\tilde{B}_c-XB_sD_c), \\
C_c=(\tilde{C}_c-D_cCY)V^{-\top}, \\
A_c=U^{-1}(\tilde{A}_c-X(A_sY+B_s\tilde{C}_c)-UB_cCY)V^{-\top},
\end{gathered}
\end{equation}
where $V$ is an arbitrary full rank matrix. 
\end{theorem}
\begin{proof}
Suppose that  \eqref{eq:3:4_H2} holds.
By applying the Schur complement rule with respect to the matrix $P$ in \eqref{E2_H2} we obtain an equivalent expression
\begin{equation}
\label{h2Derivation1}
\Theta :=
\begin{pmatrix}
P^{-1} & 0 & \tilde{A}^\top\\
0 &  I & \tilde{B}^\top \\
\tilde{A} & \tilde{B} &  P
\end{pmatrix}\succ 0.
\end{equation}
Note that this matrix inequality is equal to \eqref{eq:3:12} with $Q=-I_{m_w}$, $\tilde{p}_z=0$  and 
\begin{equation}
\begin{gathered}
P:=
\begin{pmatrix}
Y & V \\
V^\top & M_{2}
\end{pmatrix}, \quad 
P^{-1}:=
\begin{pmatrix}
X  & U \\
U^\top & M_{1}
\end{pmatrix}. \nonumber \\
\end{gathered}
\end{equation}
Thus, we can follow same the same argumentation as in
the proof of Theorem~\ref{Theorem1} to obtain the matrix inequality $\Pi \succ 0$ from \eqref{eq:600:301_H2}.

Next, by applying the Schur complement rule with respect to the matrix $I_{m_w}$ in \eqref{E3_H2} we obtain an equivalent expression
\begin{equation}
\label{h2Derivation17}
   \begin{pmatrix}
   I & \tilde{D}^\top & 0 \\
\tilde{D}& Z  &  \tilde{C}P \\
0 & P  \tilde{C}^\top &  P
\end{pmatrix} \succ 0.
\end{equation}
Finally, by applying a congruence transformation on \eqref{h2Derivation17} with respect to the matrix $S_4:=\text{diag}(I,P^{-1}\begin{pmatrix} I & Y \\ 0 & V^\top \end{pmatrix})$ we obtain the equivalent expression $\Delta \succ 0$ from \eqref{eq:600:301_H2}, this finishes the proof.
\end{proof}
\begin{remark}
Matrix inequality  $\Pi \succ 0$ from 
\eqref{eq:600:301} and \eqref{eq:600:301_H2} is not  a LMI in the decision variables, because of the term $\alpha X$. However, by fixing $\alpha \in \mathbb{R}_{> 0}$ we get an LMI that can be solved multiple times in a line search procedure (as in \cite{c4} and   \cite{R25}).
Furthermore, if LMI is not feasible, then
the zero on the right hand side can be replaced with 
$-\varepsilon I$ and a line search can be
performed while minimizing the scalar $\varepsilon$ in the direction of decreasing its value,
until $ \varepsilon < 0$ is
eventually found, in which case inequality is feasible.
The matrix inequality $\Pi \succ 0$  differs from  LMI of the analogous model-based controller synthesis. 
Size of that LMI for model-based synthesis is $4n + m_w +\tilde{p}_z$ and $4n + m_w$ for dissipativity and $H_2$ control, respectively (see e.g. \cite[Ch. 4]{R25}). In contrast, size of our LMI is increased by $n + m + \tilde{n}$.
Although our synthesis
method belongs to direct-data driven approach, note that we
calculate parameters of one arbitrary system which is consistent with
the recorded data  (what is a simple calculation step - see expression \eqref{set_element} in Definition~\ref{Definicija1}) for  convexification of the
 synthesis. If
there is no disturbance during the data recording process,
we reconstruct the actual system. In that case, $\tilde{n}=0$ in
the Definition~\ref{Definicija1} and
\begin{equation}
\label{eq_je_nula}
\begin{pmatrix}
  H_{12}^\top+ H_{22}A_s^\top+ H_{23}B_s^\top 
  \\
  H_{13}^\top+H_{23}^\top A_s^\top +H_{33} B_s^\top 
\end{pmatrix}=0,
\end{equation}
since $\Phi_{22}$ is only non-zero matrix in $\Phi$, see
\eqref{eq:2:6}. Therefore, $\Pi \succ 0$  from \eqref{eq:600:301} and \eqref{eq:600:301_H2} is reduced to
LMI. Furthermore, if a pair $(A_s,B_s)\in \Sigma $ is calculated using \eqref{set_element}, then \eqref{eq_je_nula} also holds in the noisy case. However, a pair $(A_s,B_s)\in \Sigma $ can also be  calculated in such a way that  \eqref{eq_je_nula} is not true.

\end{remark}

\section{EXAMPLES}
To illustrate the theoretical results we use the following numerical examples.
\subsection{Robust stabilization, $H_\infty$ and $H_2$ control}
\label{example1}
Consider an unstable discrete-time system from the
$H_\infty$ control example  in \cite{c4}. This system can be described using a system model \eqref{eq:3:11}
with the matrices
\begin{equation}
\nonumber
\begin{split}
A& =
\begin{pmatrix}
-0.5 & 1.4 & 0.4 \\
-0.9 & 0.3 & -1.5 \\
1.1 & 1 & -0.4
\end{pmatrix},
\quad
B_1=
\begin{pmatrix}
0.1 & -0.3 \\ 
-0.1 & -0.7 \\
0.7 & -1
\end{pmatrix}, \\
B&=I_3, \quad C_1=I_3, \quad D_1=0, \quad E=0.
\end{split}
\end{equation}
We assume that the matrices $A$ and $B$ are not available for the controller synthesis and that not all states variables are available for the control, in particular, we assume $C=\begin{pmatrix} I_2 & 0_{2\times 1} \end{pmatrix}$, $F=0_{2\times 3}$.

We use this system model to generate exact and noisy input-state data.
The disturbance bound and the method for generating the noisy input-state data are the same as in \cite[Sec. VI-B]{c6}.
Input samples and initial condition are generated using a Gaussian distribution with zero mean and unit variance. Disturbance samples are generated in the same manner, but with standard deviation $ \sigma \in \{0, 0.05, 0.1, 0.2\}$ for exact case and noisy cases, respectively.
The disturbance bound have the following form
\begin{equation}
W_-W_-^\top\preceq 1.35 N \sigma^2 I, \nonumber
\end{equation}
where the matrix $W_-$ is defined as in \eqref{NoiseData} and $N=20$ represents number of data samples (as in \cite{c4}).
Assumptions~\ref{Assumption1} and \ref{Assumption2} regarding the data matrices were verified. 
For the controller synthesis we consider minimizations of $H_\infty$ performance $\gamma$ (as defined in Remark~\ref{RemarkPrvi1}) and $H_2$ performance $\mu$.
To solve the synthesis semidefinite programs we used Yalmip \cite{c22} environment in MATLAB, with Mosek as a LMI solver.

 \begin{figure}[b!]
\centering
\includegraphics{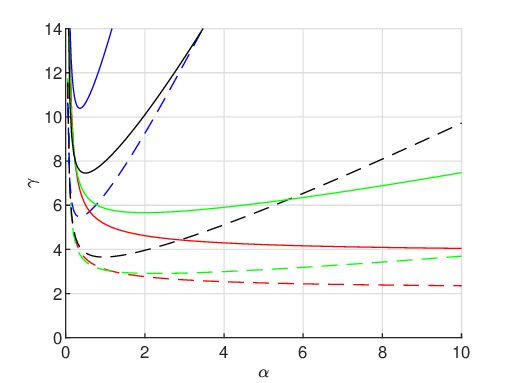}
\caption{
$H_\infty$ performance $\gamma$ as a function of the
parameter $\alpha$ using dynamic state (dashed) and output (solid) feedback controller synthesis with $\sigma \in \{0, 0.05, 0.1, 0.2\}$  in red, green, black and blue color, respectively.}
\label{fig20}
\end{figure}

The line search procedure for the $H_\infty$ controller synthesis is illustrated in the Fig.~\ref{fig20}, where $H_\infty$ performance $\gamma$ is a function of the parameter $\alpha$.
 Note that we obtain the same closed-loop performance using the model-based and the data-driven controller  with $\sigma=0$, which is, when state-feedback is considered, also consistent with the result reported in \cite{c4} (where only state-feedback was solved).  
The frequency response of the closed-loop system with obtained data-driven $H_\infty$ controller is presented in Fig.~\ref{fig22}, where by the term frequency response we refer to the largest singular value $\bar{\sigma}_{max}$ of the
corresponding transfer function $T(j\omega)$ on a particular
frequency $\omega $ (see \eqref{PrijenosnaFunkcija}). 
Additionally, numerical results for the $H_\infty$ and the $H_2$ closed-loop performance are presented in Table~\ref{Tablica1}.  
The presented results show that: (i) the developed methods achieve good robust performance in the 
case when the output is not the full state, and (ii) they recover the performance of the static state-feedback 
robust data-driven control solutions when the output coincides with the full state.

 \begin{figure}[t!]
\centering
\includegraphics{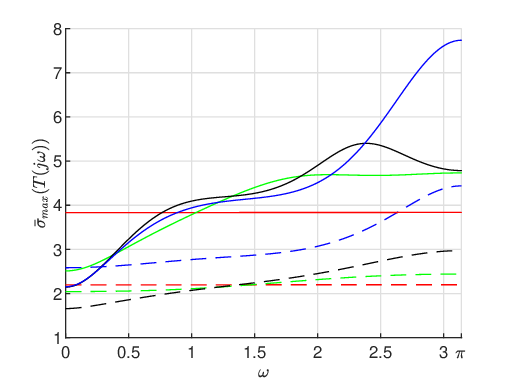}
\caption{
Frequency response of the closed-loop system using dynamic state (dashed) and output (solid) feedback controller with $\sigma \in (0, 0.05, 0.1, 0.2)$  in red, green, black and blue color, respectively.}
\label{fig22}
\end{figure}
\begin{table}[h!]
\renewcommand{\arraystretch}{1.3}
\caption{Numerical results for $H_\infty$ and $H_2$ control.}
\begin{center}
\label{Tablica1}
\begin{tabular}{ |c|c|c|c|c| } 
\hline 
 &
\multicolumn{2}{|c|}{state-feedback control} & 
\multicolumn{2}{|c|}{output-feedback control}
\\
\hline
$\sigma$  & $\gamma$ (bound) & $\mu$ (bound)  & $\gamma$ (bound) & $\mu$ (bound) \\
\hline
0  & 2.20 (2.20)   & 2.39 (2.39) & 3.84 (3.84)   & 3.78 (3.78) \\ 
0.05   & 2.44 (2.91) &   2.40 (2.49)   &  4.73 (5.66) &   3.86 (4.08)    \\ 
0.1   & 2.97 (3.66) &  2.56 (3.17)   & 5.44 (7.46) &  4.10 (4.76) 	 \\
0.2  & 4.44 (5.48)   & 3.17 (4.36)   & 7.74 (10.39)  & 4.81 (6.19) \\
\hline
\end{tabular}
\end{center}
\end{table}

\subsection{Robust $H_\infty$ control of an active suspension}
Consider a continuous-time state-space model of a quarter-car model of the active suspension system from \cite{MatRef} presented on Fig~\ref{fig40}. Using the forward Euler method we obtained the following state-space matrices
\begin{equation}
\nonumber
A=\begin{pmatrix}
1 & T_s & 0 & 0\\
-T_s\frac{k_s}{m_b} & 1-T_s\frac{b_s}{m_b} & T_s\frac{k_s}{m_b}  & T_s\frac{b_s}{m_b} \\
0 & 0 & 1 & T_s \\
\frac{k_s}{m_w} & \frac{b_s}{m_w} & \frac{-k_s-k_t}{m_w} &   1-T_s\frac{b_s}{m_w}
\end{pmatrix},
\end{equation}
\begin{equation}
\nonumber
B=\begin{pmatrix}  0\\
  T_s \frac{10^3}{m_b} \\
   0 \\
   - T_s \frac{10^3}{m_w} 
\end{pmatrix}, \quad 
B_1= \begin{pmatrix} 0 \\
 0 \\
  0  \\
   T_s k_t 
\end{pmatrix},
\end{equation}
associated with the state-space vector $\text{col}(x_b, v_b, x_w, v_w)$, the force of active component of the suspension $u \text{ [kN]}$ and  the road disturbance $w \text{ [m]}$.
Variables
$x_b \text{ [m]}$ and $v_b \text{ [m/s]}$ are the body position and velocity, respectively,
$x_w \text{ [m]}$ and $v_w \text{ [m/s]}$ are the wheel assembly position and velocity, respectively, $T_s=10^{-3}\text{s}$ is the discretization time step,  $k_s=1.6\cdot 10^4 \frac{\text{N}}{\text{m}}$ is the spring stiffness, $m_b=300 \text{kg}$  is  the quarter-car body mass, $b_s=1000 \frac{\text{N}}{\text{m/s}}$ is the damping coefficient,  $m_w=60 \text{kg}$  is  the wheel assembly mass and $k_t=1.9\cdot 10^5 \frac{\text{N}}{\text{m}}$ is the compressibility of the pneumatic tire. 
\begin{figure}[h!]
\centering
\includegraphics{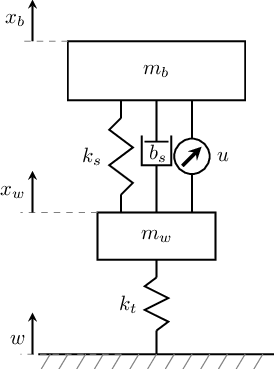}
\caption{
Active suspension system.
}
\label{fig40}
\end{figure}

The performance channel and the output available for control are defined with matrices
\begin{equation}
\nonumber
C_1=\begin{pmatrix}
1 & 0 & 0 & 0 \\
1 & 0 & -1 & 0 
\end{pmatrix}, \quad  D_1=0, \quad  E=0,
\end{equation}
and
\begin{equation}
C=\begin{pmatrix} 1 & 0 & -1 & 0 \end{pmatrix}, \quad F=0,
\end{equation}
respectively.
We consider the same problem setting as in the Section~\ref{example1}, except
the input is randomly generated from the interval  $[-10,10]$, initial conditions are equal to zero, $\sigma \in \{0,0.001\}$ for noiseless and noisy case, respectively,  and $N=2000$.

As in the previous example, derived method recovers the $H_\infty$ performance of model-based synthesis ($\gamma=1.22$) for the noiseless case ($\sigma=0$).
Furthermore, histogram on  Fig. \ref{fig45}  illustrates performance robustness using 1000 generated system trajectories for the noisy case ($\sigma=0.001$). The presented results show that the derived method achieve good robust performance results.

 \begin{figure}[t!]
\centering
\includegraphics{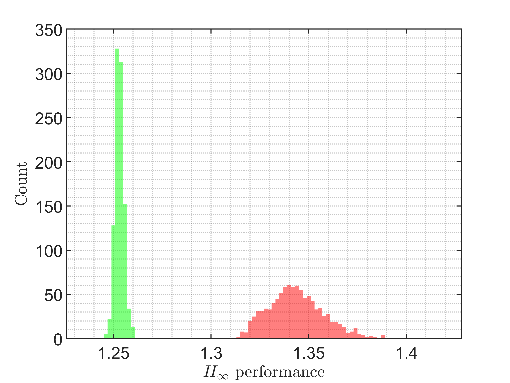}
\caption{
Green and red columns represent counts of obtained $H_\infty$ performances of an actual closed-loop systems and their bounds, respectively.}
\label{fig45}
\end{figure}

\section{CONCLUSIONS}
In this paper we have proposed non-conservative data-driven dynamic output-feedback controller synthesis methods for generic discrete-time LTI systems with the closed-loop performance criteria formalized in terms of dissipativity and $H_2$ performance.
 These are generalizations of existing static state-feedback controller
synthesis methods based on exact and noisy input-state data. 
The presented numerical examples illustrates the effectiveness of the proposed methods.
\section*{APPENDIX}
\subsection{Performance certificates and controller realizations}
\label{AppendixA}
Consider the controller synthesis LMIs \eqref{eq:3:4} and corresponding closed loop system defined by \eqref{closed_loop} and \eqref{closed_loop_matrices}.
We call the matrix $P$ that satisfies these LMIs \emph{the performance certificate}. 
Here we present some properties of performance certificates $P$. 
 These properties are instrumental in the controller synthesis procedure.  
\begin{lemma}
\label{lemma3}
Suppose the inequalities \eqref{eq:3:4} hold for the closed-loop system \eqref{closed_loop} with some controller state space realization matrices $(\bar{A}_c, \bar{B}_c, \bar{C}_c, \bar{D}_c)$ and the corresponding performance certificate \begin{small}$\bar{P}:=\begin{pmatrix} X & \bar{U} \\ \bar{U}^\top & \bar{M}_1 \end{pmatrix}$\end{small} where $X \in \mathbb{R}^{n \times n}$. Let $U \in \mathbb{R}^{n \times n}$ be an arbitrary given matrix. Then there exists a regular matrix $L\in \mathbb{R}^{n \times n} $ such that a controller space state realization matrices 
$(A_c, B_c, C_c, D_c):=(L\bar{A}_c L^{-1}, L \bar{B}_c, \bar{C}_c L^{-1}, \bar{D}_c)$ and the performance certificate \begin{small}$P:=\begin{pmatrix} X & U \\ U^\top & M_1 \end{pmatrix}$\end{small} 
satisfy \eqref{eq:3:4}.  
\end{lemma}
\begin{proof} 
Recall the matrix partition in \eqref{closed_loop_matrices} and note that we can always perturb $\bar{U}$ to obtain a regular matrix since \eqref{eq:3:4} is a strict inequality. 
Next, we apply the congruence transformation on \eqref{E1} and \eqref{E2} with respect to matrices $\text{diag}(I_n, L^{-1})$ and $\text{diag}(I_n,L^{-1}, I_{m_w} )$, respectively.
Then by using the substitutions
$(\bar{A}_c, \bar{B}_c, \bar{C}_c, \bar{D}_c)=(L^{-1}A_c L, L^{-1} B_c, C_c L, D_c)$, $U=\bar{U}L^{-1}$ and $M_1=L^{-\top}\bar{M}_1 L^{-1}$ we obtain a new set of LMIs in the form of \eqref{eq:3:4}
with the controller space state realization matrices $(A_c, B_c, C_c, D_c)$ and
the performance certificate \begin{small}$P=\begin{pmatrix} X & U \\ U^\top & M_1 \end{pmatrix}$\end{small}.
Therefore, we can conclude that by appropriate choice of $L$ we can render arbitrary $U$ from some given $\bar{U}$.      
\end{proof}
Note that the Lemma~\ref{lemma3} also holds if we consider \eqref{eq:3:4_H2} instead of  \eqref{eq:3:4}. We can prove this by following the proof of the Lemma~\ref{lemma3}
and substituting \eqref{E1}, \eqref{E2}, $\text{diag}(I_n, L^{-1})$, $\text{diag}(I_n,L^{-1}, I_{m_w} )$, $U=\bar{U}L^{-1}$, $M_1=L^{-\top}\bar{M}_1 L^{-1}$ with  \eqref{E2_H2}, \eqref{E3_H2}, $\text{diag}(I_{m_w+n}, L^\top)$, $\text{diag}(I_{p_z+n},L^{\top})$, $U=\bar{U}L^{\top}$, $M_1=L\bar{M}_1 L^{\top}$, respectively.

\balance

\bibliographystyle{IEEEtran}
\bibliography{myreferences}

\end{document}